\documentclass[11pt]{amsart}
\newtheorem{theorem}{Theorem}[section]

\newtheorem{lemma}[theorem]{Lemma}
\newtheorem{proposition}[theorem]{Proposition}

\theoremstyle{definition}

\numberwithin{equation}{section}

\begin{document}

\title[Lack of interior $L^q$ bounds for stable solutions]{Lack of interior $L^q$ bounds for stable solutions to elliptic equations}
\author{Salvador Villegas}
\address{Departamento de An\'{a}lisis
Matem\'{a}tico, Universidad de Granada, 18071 Granada, Spain.}
\email{svillega@ugr.es}

\begin{abstract}
We consider stable solutions of semilinear elliptic equations of the form $-\Delta u=f(u)$ in a bounded domain $\Omega\subset\mathbb{R}^N$. In a well-known paper \cite{cfrs}, Cabré, Figalli, Ros-Oton and Serra obtained interior estimates for the $W^{1,2}$-norm of $u$ in terms of the $L^1$-norm of $u$ and proved interior Hölder regularity for dimensions $N\leq 9$. All these results rely on the assumption that $f$ is nonnegative. We show that, for general nonlinearities $f\in C^\infty(\mathbb{R})$, it is impossible, in any dimension $N\geq 1$, to obtain an interior $L^q$ estimate in terms of the $L^p$-norm of $u$ whenever $1\leq p<q\leq \infty$.
\end{abstract}

\maketitle

\section{Introduction and results}

We consider the semilinear elliptic equation

\begin{equation}\label{mainequation}
-\Delta u=f(u)\ \ \ \ \ \ \ \mbox{ in } \Omega,
\end{equation}

\noindent where $\Omega\subset\mathbb{R}^N$  is a bounded domain, $f\in C^1(\mathbb{R})$ and $u\in C(\overline{\Omega})$. By standard regularity arguments, we obtain that any solution $u \in C(\overline{\Omega})$ (no matter how weak the notion of solution is) is a classical solution, and $u \in C^2(\Omega) \cap C^1(\overline{\Omega})$.

This problem is the Euler–Lagrange equation associated with the energy functional

\begin{equation}\label{energy}
E(u):=\int_{\Omega} \left(\frac{1}{2}|\nabla u|^2-F(u)\right)dx,
\end{equation}
where $F(t)=\int_0^t f(s)ds$. The second variation of \eqref{energy} at $u$ is given by  
\begin{equation*}\label{stability}
Q_u(\xi):=\left.\frac{d^2}{d\varepsilon^2}\right|_{\varepsilon=0}E(u+\varepsilon\xi)=\int_{\Omega} \left(|\nabla \xi|^2-f'(u)\xi^2\right)dx.
\end{equation*}

We say that a solution $u$ of \eqref{mainequation} is stable if $Q_u(\xi)\geq 0$ for all $\xi\in C^1_0(\overline{\Omega})$ with $\xi=0$ on $\partial \Omega$. Stability is equivalent to the nonnegativeness of the first Dirichlet eigenvalue for $-\Delta-f'(u)$, the linearized operator of \eqref{mainequation}.

Examples of stable solutions are local minimizers of $E$. Recall that $u\in C^1(\overline{\Omega})$ is a local minimizer of $E$ if it minimizes $E$ under every sufficiently small $C^1(\overline{\Omega})$  perturbation with the same boundary values. In many cases, solutions of $-\Delta u=f(u)$ in a suitable weak sense can be obtained as pointwise limits of classical solutions. For this reason, in order to establish the regularity of such weak solutions, it is very important to obtain a priori $L^\infty$  bounds for classical solutions. Some widely studied examples in the literature are $f(u)=\lambda e^u$ and $f(u)=\lambda (1+u)^p$ for certain values of $\lambda>0$ and $p>1$ (see the works of Gelfand \cite{g} and Crandall and Rabinowitz \cite{cr}). We refer the reader to the monograph \cite{dupaigne} by Dupaigne for further results on stable solutions.

In 2020, Cabré, Figalli, Ros-Oton and Serra \cite{cfrs} proved Hölder regularity for stable solutions to \eqref{mainequation} in the optimal dimensional range $N\leq 9$. Among other results, they obtain the following result on interior regularity:

\begin{theorem}[{\cite{cfrs}, Theorem 1.2}.]\label{interior}
Let $u\in C^\infty(\overline{B_1})$ be a stable solution of $-\Delta u=f(u)$ in $B_1\subset \mathbb{R}^N$, for some nonnegative function $f\in C^1(\mathbb{R})$ .

Then,
$$\Vert\nabla u\Vert_{L^{2+\gamma}(B_{1/2})}\leq C\Vert u\Vert_{L^1(B_1)} \ \  \ \mbox{ for every } N\geq 1$$
\noindent for some dimensional constants $\gamma>0$ and $C$. In addition,
$$\Vert u\Vert_{C^\alpha (\overline{B}_{1/2})}\leq C\Vert u\Vert_{L^1(B_1)} \ \  \ \mbox{ if } N\leq 9,$$
\noindent where $\alpha>0$ and $C$ are dimensional constants.
\end{theorem}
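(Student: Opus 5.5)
The statement is Theorem 1.2 of \cite{cfrs}, the interior regularity theorem of Cabré, Figalli, Ros-Oton and Serra. My plan follows their strategy, with the $W^{1,2+\gamma}$ bound as the core and the Hölder bound derived from it.

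\textbf{Step 1: a stability inequality that does not involve $f$.} Test the stability inequality $Q_u(\xi)\geq 0$ with $\xi=(x\cdot\nabla u)\,|x|^{(2-N)/2}\eta$, where $\eta$ is a cutoff. Differentiating the equation shows that $c=x\cdot\nabla u$ satisfies $-\Delta c - f'(u)c = -2\Delta u$. Integrating by parts, the term $f'(u)$ then disappears, and we get an inequality that involves only $\nabla u$ and $D^2u$. For $3\le N\le 9$ the weight is chosen so that this yields
\begin{equation*}
\int_{B_\rho}|x|^{-N}|x\cdot\nabla u|^2\,dx\leq C\int_{B_{3\rho/2}\setminus B_{\rho}}\rho^{2-N}|\nabla u|^2\,dx .
\end{equation*}
Separately, testing stability with $\xi=|\nabla u|\eta$ gives the interior Hessian bound
\begin{equation*}
\|D^2u\|_{L^1(B_{1/2})}\le C\|\nabla u\|_{L^2(B_1)}.
\end{equation*}

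\textbf{Step 2: control of $\|\nabla u\|_{L^{2+\gamma}}$ by $\|u\|_{L^1}$.} This is where $f\geq 0$ enters: $u$ is superharmonic, so $\int|\nabla u|\lesssim\|u\|_{L^1}$ on smaller balls. I would combine the Hessian $L^1$ bound with an interpolation inequality on the sphere or ball, namely
\begin{equation*}
\|\nabla u\|_{L^2}^2\le C\|\nabla u\|_{L^1}\|D^2u\|_{L^1}+\dots .
\end{equation*}
This gives $\|\nabla u\|_{L^2(B_r)}\le \delta\|\nabla u\|_{L^2(B_1)}+C_\delta\|u\|_{L^1}$. A standard absorption lemma (Simon's covering argument) then removes the $\delta$-term. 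Higher integrability $2+\gamma$ follows from a reverse Hölder / Gehring argument, using a $W^{2,1}$-to-$L^2$ bound at all scales.

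\textbf{Step 3: Hölder bound for $N\le 9$.} Iterate the radial inequality of Step 1 with a hole-filling argument. This shows the radial Dirichlet energy decays like $\rho^{2-N+2\alpha}$. The key point is to control the full gradient energy by the radial part up to a compact error. I expect this to be the main obstacle. I would handle it by a compactness/contradiction argument: a blow-up limit with vanishing radial derivative would be $0$-homogeneous and superharmonic, hence constant, contradicting normalization. Morrey's lemma then gives $C^\alpha$, and Step 2 bounds everything by $\|u\|_{L^1(B_1)}$. The dimensions $N\le 2$ are handled by adding dummy variables.
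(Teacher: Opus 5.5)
The paper does not prove this statement. It quotes it from \cite{cfrs} as background, so the only meaningful comparison is with the argument there. Your Steps 1 and 3 follow its architecture: the test function $x\cdot\nabla u\,|x|^{(2-N)/2}\eta$ for the weighted radial bound when $3\le N\le 9$, the Sternberg--Zumbrun test function $|\nabla u|\eta$, a compactness lemma saying the radial derivative controls the full gradient, then hole-filling and Morrey.

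There is a genuine gap in Step 2. The interpolation inequality $\Vert\nabla u\Vert_{L^2}^2\le C\Vert\nabla u\Vert_{L^1}\Vert D^2u\Vert_{L^1}+\dots$ is false for $N\ge 2$, and its analogue on $S^{N-1}$ fails for $N\ge 3$. Take $u=h\varepsilon\varphi(x/\varepsilon)$ with $\varphi\in C^\infty_c(B_{1/2})$. The left side is of order $h^2\varepsilon^N$ and the right side of order $h^2\varepsilon^{2N-1}$.

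More fundamentally, the bound $\Vert D^2u\Vert_{L^1(B_{1/2})}\le C\Vert\nabla u\Vert_{L^2(B_1)}$ plus $L^1$ information cannot yield $\Vert\nabla u\Vert_{L^2(B_{1/2})}\le\delta\Vert\nabla u\Vert_{L^2(B_1)}+C_\delta\Vert\nabla u\Vert_{L^1(B_1)}$ with $\delta<1$. The same bump satisfies the Hessian bound uniformly in $\varepsilon\le1$, since $h\varepsilon^{N-1}\le h\varepsilon^{N/2}$. Meanwhile $\Vert\nabla u\Vert_{L^1}\approx h\varepsilon^N$ is negligible against $\Vert\nabla u\Vert_{L^2}\approx h\varepsilon^{N/2}$. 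The unweighted $W^{2,1}$ bound does not prevent concentration of $|\nabla u|^2$; it only places $\nabla u$ in $L^{N/(N-1)}$, which is below $L^2$ for $N\ge3$.

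For the same reason your Gehring step fails as phrased: a $W^{2,1}$-to-$L^2$ bound is not a reverse H\"older inequality. Gehring needs $\Vert\nabla u\Vert_{L^2(B_{r/2}(y))}\le Cr^{-N/2}\Vert\nabla u\Vert_{L^1(B_r(y))}$ at all scales, which is exactly what Step 2 was supposed to produce.

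What is missing is a Hessian bound weighted by $|\nabla u|$:
\[
\int_{B_{1/2}}|\nabla u|\,|D^2u|\,dx\le C\Vert\nabla u\Vert_{L^2(B_1)}^2 .
\]
It follows from the same test function $|\nabla u|\eta$.

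\begin{itemize}
\item Pointwise, $|D^2u|\le C\bigl(|\Delta u|+(|D^2u|^2-|\nabla|\nabla u||^2)^{1/2}\bigr)$. The second term is handled by Cauchy--Schwarz and the Sternberg--Zumbrun inequality.
\item For the first term, use $f\ge0$ to write $|\Delta u|=-\Delta u$. Split $|\nabla u|\Delta u=\nabla|\nabla u|\cdot\nabla u+|\nabla u|\,T$ with $|T|\le C(|D^2u|^2-|\nabla|\nabla u||^2)^{1/2}$, and integrate the first piece by parts against $\eta^2$. So $f\ge0$ already enters here, as it does in your unweighted Hessian bound, not only in the $L^1$ step.
\item Sobolev applied to $\eta^2|\nabla u|^2\in W^{1,1}_0$ then gives $\Vert\nabla u\Vert_{L^{2N/(N-1)}(B_{1/2})}\le C\Vert\nabla u\Vert_{L^2(B_1)}$ directly, with no Gehring.
\end{itemize}

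Only after this does the interpolation close. H\"older between $L^1$ and $L^{2+\gamma}$, plus Young, gives the $\delta$-inequality. Simon's lemma then absorbs the $\delta$-term, and superharmonicity gives $\Vert\nabla u\Vert_{L^1(B_{1/2})}\le C\Vert u\Vert_{L^1(B_1)}$. This is the order in \cite{cfrs}, $L^{2+\gamma}$ from $L^2$ and then $L^2$ from $L^1$, which is the reverse of yours.

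Your compactness argument in Step 3 needs strong $L^2$ convergence of gradients, that is, the $L^{2+\gamma}$ bound, so it also depends on this repair. A minor point: the decay should read $\int_{B_\rho}|\nabla u|^2\le C\rho^{N-2+2\alpha}$, not $\rho^{2-N+2\alpha}$.
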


In \cite[Open problem 1.6.1]{cabre1}, the author asks whether the nonnegativity of $f$ is necessary to obtain interior $L^\infty$ estimates. In fact, for general nonlinearities $f\in C^\infty(\mathbb{R})$ (without any additional assumptions), interior $L^\infty$ estimates can be obtained in dimensions $N\leq 4$ \cite{cabre2, cabre3} as well as in dimension $N=5$ \cite{pzz}. In fact, in dimensions $2\leq N\leq 9$, for nonlinearities that are bounded from below and exhibit at least linear growth at infinity, one also obtains interior $C^{0,\alpha}$ regularity (see \cite{fp}). Such estimates are also available in the radial case for dimensions $N\leq 9$ \cite{cc}, leaving the question open for the nonradial case in dimensions $6\leq N\leq 9$ without the hypothesis $f\geq 0$.

In any case, in all the situations mentioned above, the interior $L^\infty$ estimates are obtained in terms of the $W^{1,2}$-norm of $u$.  In this paper, we show that, for general nonlinearities $f\in C^\infty(\mathbb{R})$ (we emphasize that no further assumptions are imposed), it is impossible to obtain, in any dimension $N\geq 1$, an interior $L^q$ estimate in terms of the $L^p$-norm of $u$ whenever $1\leq p<q\leq \infty$.

\begin{theorem}\label{principal}

Let $N\geq 1$ and $1\leq p<q\leq\infty$. Then there exists a sequence $\{u_n\}_{n\in \mathbb{N}}\subset C^\infty(\overline{B_1})$ of stable solutions to problems \eqref{mainequation} with nonlinearities $\lbrace f_n\rbrace_{n\in \mathbb{N}}\subset C^\infty(\mathbb{R})$ such that:
\begin{equation*}
\frac{\Vert u_n\Vert_{L^q (B_{1/2})}}{\Vert u_n\Vert_{L^p(B_1)}}\longrightarrow +\infty\ \ \ \ \ \text{as}\ \ n\rightarrow\infty
\end{equation*} 
\end{theorem}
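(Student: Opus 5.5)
The plan is to build the $u_n$ by hand: choose a profile for $u$ first, read off $f$ from the equation, and certify stability through a positive supersolution of the linearized operator. The stability criterion I will use is Picone's identity. Suppose there is $\varphi\in C^2(\overline{B_1})$ with $\varphi>0$ in $\overline{B_1}$ and $-\Delta\varphi-f'(u)\varphi\geq 0$ in $B_1$. Writing $\xi=\varphi\eta$ and integrating by parts gives
$$Q_u(\xi)=\int_{B_1}\varphi^2|\nabla\eta|^2\,dx+\int_{B_1}\left(-\Delta\varphi-f'(u)\varphi\right)\varphi\,\eta^2\,dx\geq 0,$$
so $u$ is stable. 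The construction therefore reduces to choosing a pair $(u,\varphi)$ together with a smooth $f$ for which $-\Delta u=f(u)$ holds identically.

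To have $f$ well defined and smooth, I will take $u$ radial about a point $c$, say $u(x)=g(|x-c|)$, or one-dimensional, $u(x)=g(x_1)$. The profile $g$ should be strictly monotone on the relevant range of $r$, or else be a genuine solution of the ODE
$$g''+\frac{N-1}{r}\,g'=-f(g)$$
for an $f$ prescribed in advance. In the strictly monotone case, $f$ is defined on the range of $g$ by composing $-\Delta u$ with $g^{-1}$, and then extended smoothly to all of $\mathbb{R}$. For the norms, I want $g_n$ to concentrate: $|u_n|\sim M_n$ on a set of measure $\sim\delta_n^N$ inside $B_{1/2}$, with $M_n$ and $\delta_n$ tuned so that $M_n\delta_n^{N/p}$ stays bounded. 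Then the ratio in Theorem \ref{principal} behaves like $\delta_n^{N(1/q-1/p)}\to\infty$, and when $q=\infty$ it simply blows up.

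The main obstacle is reconciling concentration with stability. The natural candidate $\varphi=\partial_\nu u$, a directional or radial derivative, fails in two ways. A monotone one-dimensional profile cannot be large only on a small set, since it stays large on a half-slab of $B_1$ that carries substantial $L^p$ mass. For a radial profile, $\varphi=-u_r$ satisfies $-\Delta\varphi-f'(u)\varphi=-\frac{N-1}{r^2}\varphi$, which has the wrong sign. My first attempt is to compensate this defect using the spectral gap of $-\Delta$ on $B_1$. I will take $\varphi=-u_r\cdot\psi$, with $\psi$ a fixed positive multiplier (for instance built from a Dirichlet eigenfunction of a slightly larger ball), so that the positive term coming from $\psi$ absorbs $\frac{N-1}{r^2}$ away from the spike. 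Near the spike, I will choose $g_n$ so that the dangerous region has radius $\delta_n$ and the potential $f_n'(u_n)$ there stays below a Hardy-type threshold (for $N\geq 3$, $f'(u)\leq\frac{(N-2)^2}{4|x-c|^2}$), which keeps the quadratic form nonnegative. Verifying that these two regimes glue into a single positive supersolution, uniformly in $n$, and covering $N=1,2$ where Hardy's inequality is unavailable (there I would fall back on the explicit one-dimensional ODE and Picone directly), is where I expect the real difficulty of the proof to lie.
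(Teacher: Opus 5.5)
Your proposal has a genuine gap, and it is exactly where the content of Theorem~\ref{principal} lies: you never produce a family of \emph{stable} solutions with bounded $L^p(B_1)$-norm and unbounded $L^q(B_{1/2})$-norm. The Picone criterion and the norm count are correct but routine. The step that would make them applicable is left open: gluing $-\partial_r u\cdot\psi$ away from the spike to a Hardy regime near it, uniformly in $n$, plus the cases $N=1,2$. As sketched, that step is unlikely to work.

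By \cite{cc}, in dimensions $2\le N\le 9$ a radial stable solution (for arbitrary $f$) is bounded in $L^\infty(B_{\rho/2}(c))$ by its $H^1$-norm on the annulus $B_\rho(c)\setminus B_{\rho/2}(c)$, and only interior stability is used. Applied with a fixed $\rho$ and $B_\rho(c)\subset B_1$, this forces the $H^1$-norm of $u_n$ on every fixed annulus around $c$ to blow up. So $u_n$ cannot be a spike at scale $\delta_n$ that stays tame away from $c$, which is the picture your norm count relies on.

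Your dismissal of one-dimensional profiles is also too hasty. The obstruction you give concerns only profiles monotone on all of $(-1,1)$, which is what the choice $\varphi=\partial_{x_1}u$ forces. It does not apply to even profiles decreasing in $|x_1|$, which are large only on thin slabs.

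The missing idea is to use such even profiles $u(x_1)$ and prove stability in 1D without exhibiting a global positive supersolution. Stability then passes to any $N$ by Fubini, since $|\nabla\xi|^2\ge(\partial_{x_1}\xi)^2$. Near the maximum, where $u'$ vanishes, take $u$ to be an exact parabola, $-u'=Kx$ on $(0,\varepsilon_0]$, so that $f\equiv K$ and $f'(u)=0$ there. Then require $-u'\ge Kx$ on $(\varepsilon_0,1)$. Integrating by parts against $u'$ on $(\varepsilon_0,1)$ gives
\[
\int_0^1(\xi'^2-f'(u)\xi^2)\,dx\ \ge\ \int_{\varepsilon_0}^1\Bigl(\xi'-\frac{u''}{u'}\xi\Bigr)^2dx-\frac{\xi(\varepsilon_0)^2}{\varepsilon_0}.
\]
The boundary term is absorbed by Cauchy--Schwarz applied to $\xi(\varepsilon_0)/u'(\varepsilon_0)=-\int_{\varepsilon_0}^1(\xi/u')'\,dx$, using $\int_{\varepsilon_0}^1u'^{-2}\,dx\le 1/(K^2\varepsilon_0)$.

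A cutoff between $x_0/2$ and $x_0$ then glues $Kx$ to $-\phi'$ for any $\phi$ with $\phi'<0$. This gives $u=\phi$ on $[x_0,1]$, and $0\le u\le\phi$ when $\phi(1)=0$. Take $\phi\in L^p\setminus L^q$, e.g.\ $|\log x|$ for $q=\infty$ and $x^{-1/q}-1$ for $q<\infty$. The $L^p(B_1)$-norms then stay bounded, while the $L^q$-norm over $(x_0,\frac{1}{2\sqrt N})\times(0,\frac{1}{2\sqrt N})^{N-1}\subset B_{1/2}$ diverges as $x_0\to0$. No spike is needed: the blow-up comes from the fixed singular profile $\phi$ that $u_n$ follows down to $x_0$.
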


\begin{theorem}\label{gradiente}

Let $N\geq 1$, $1\leq p\leq\infty$ and $1\leq q \leq \infty$ with $(p,q)\neq (\infty,1)$. Then there exists a sequence $\{u_n\}_{n\in \mathbb{N}}\subset C^\infty(\overline{B_1})$ of stable solutions to problems \eqref{mainequation} with nonlinearities $\lbrace f_n\rbrace_{n\in \mathbb{N}}\subset C^\infty(\mathbb{R})$ such that:
\begin{equation*}
\frac{\Vert\nabla u_n\Vert_{L^q (B_{1/2})}}{\Vert u_n\Vert_{L^p(B_1)}}\longrightarrow +\infty\ \ \ \ \ \text{as}\ \ n\rightarrow\infty
\end{equation*} 
\end{theorem}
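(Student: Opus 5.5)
The plan is to produce explicit stable solutions from functions whose Laplacian is a function of $u$ itself, and to certify stability with a positive solution of the linearized equation. I split into two regimes: $q>1$, where a one-dimensional construction suffices, and $q=1$, $p<\infty$, where I expect to need radial, concentrating profiles.

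\emph{Step 1 (how stability is obtained).} Let $g\in C^\infty([-1,1])$ with $g'>0$ on $[-1,1]$, and set $u(x)=g(x_1)$. Since $g$ is strictly increasing, the relation $-g''=f(g)$ defines $f:=-g''\circ g^{-1}$ on $g([-1,1])$. This $f$ is smooth there, and I extend it to a function in $C^\infty(\mathbb{R})$. Then $-\Delta u=f(u)$ in $B_1$. Differentiating in $x_1$ shows that $\varphi:=u_{x_1}=g'(x_1)>0$ satisfies $-\Delta\varphi=f'(u)\varphi$ in $B_1$. For $\xi\in C^1_0(\overline{B_1})$, write $\xi=\varphi\eta$ and integrate by parts; the standard identity is
\[
Q_u(\xi)=\int_{B_1}\varphi^2|\nabla\eta|^2\,dx\ \ge 0 .
\]
So every such $u$ is stable. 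The same principle applies to any solution admitting a positive solution of the linearized equation.

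\emph{Step 2 (the case $q>1$, any $p$).} Take $g_n(t)=\tanh(nt)+t/n$. Then $\|u_n\|_{L^p(B_1)}\le C$ for every $p\le\infty$. On the other hand,
\[
\|\nabla u_n\|_{L^q(B_{1/2})}\ge c\Big(\int_{-1/4}^{1/4}\big(n\,\mathrm{sech}^2(nt)\big)^q\,dt\Big)^{1/q}\ge c\, n^{1-1/q}.
\]
This tends to $+\infty$ whenever $q>1$. For $q=\infty$ the bound is simply $\ge cn$.

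\emph{Step 3 (the case $q=1$, $p<\infty$; main obstacle).} One-dimensional monotone profiles cannot work here. If $g(1/2)\ge 0$, then $g\ge g(1/2)$ on $[1/2,1]$, a slab of positive volume, so $\|u\|_{L^p}\ge c\,|g(\pm1/2)|$. This already controls the total variation $\|\nabla u\|_{L^1(B_{1/2})}\approx g(1/2)-g(-1/2)$. I would therefore switch to radially decreasing profiles $u_n(x)=\phi_n(|x|)$ that concentrate near the origin. An example is a smoothed version of $\phi_n(r)=\min\{A_n,\ r^{-a}\}$ minus a constant, normalized so that $\phi_n(1/2)=0$ and $\phi_n$ is slightly negative beyond $r=1/2$. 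Again $f_n:=-\Delta u_n\circ\phi_n^{-1}$ is well defined because $\phi_n'<0$ for $r>0$. Near the origin I make $\phi_n$ exactly quadratic, $A_n-\kappa r^2$, so that $f_n$ is smooth.

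Stability is the delicate point, because $\partial_r u$ vanishes at $0$. I would certify it by exhibiting a positive supersolution of the linearized operator, for instance $\psi=-x\cdot\nabla u_n+\varepsilon$ or $|\nabla u_n|$-type weights. Alternatively, a Hardy-type inequality can show $f_n'(u_n)\le \frac{(N-2)^2}{4|x|^2}$, which suffices for $N\ge3$; low dimensions would need separate care, for example by keeping $a$ small.

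Once stability holds, the ratio is estimated directly. We have $\|\nabla u_n\|_{L^1(B_{1/2})}=|\partial B_1|\int_0^{1/2}|\phi_n'|r^{N-1}\,dr$. The norm $\|u_n\|_{L^p(B_1)}$ stays bounded when $ap<N$. The exponent $a$ and the cutoff height $A_n\to\infty$ must then be chosen so that the first quantity diverges while the second stays bounded. This balancing, together with stability, is the step I expect to fail first, and it may require choosing the profile differently, for example as an oscillation-free but steep radial layer.
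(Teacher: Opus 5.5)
Your Step 2 for $q>1$ is correct and simpler than the paper's argument. The monotone layer $g_n(t)=\tanh(nt)+t/n$ is stable because $\partial_{x_1}u_n=g_n'(x_1)\ge 1/n$ is a positive solution of the linearized equation. Moreover $\|u_n\|_{L^\infty}\le 2$ and $\|\nabla u_n\|_{L^q(B_{1/2})}\ge c\,n^{1-1/q}$. It also covers $q=\infty$ directly, whereas the paper's choice $\phi(x)=1-x^{1-1/q}$ reduces to $1-x$ there, so that endpoint must be deduced from a finite $q$.

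The gap is the case $q=1$, $p<\infty$. It is only sketched, and the sketched radial route cannot work in general. Take $N\ge 2$ and any radially nonincreasing $u=\phi(|x|)\in C^1(\overline{B_1})$. Integration by parts gives
\[
\int_{B_{1/2}}|\nabla u|\,dx=|\partial B_1|\Bigl(-2^{1-N}\phi(1/2)+(N-1)\int_0^{1/2}\phi(r)\,r^{N-2}\,dr\Bigr)\le C\Bigl(|\phi(1/2)|+\int_{B_{1/2}}\frac{|u|}{|x|}\,dx\Bigr).
\]
Two facts then close the estimate. First, $|x|^{-1}\in L^{p'}(B_{1/2})$ as soon as $p>N/(N-1)$. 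Second, $|\phi(1/2)|\le C\|u\|_{L^p(B_1)}$ by your slab comparison: either $u\ge\phi(1/2)\ge0$ on $B_{1/2}$, or $u\le\phi(1/2)<0$ on $B_1\setminus B_{1/2}$. Hence $\|\nabla u\|_{L^1(B_{1/2})}\le C\|u\|_{L^p(B_1)}$ for every such profile, stable or not, whenever $p>N/(N-1)$ (e.g.\ $N=2$, $p=3$). This is exactly why your two constraints are incompatible: $a\ge N-1$ is needed for $\int_0^{1/2}|\phi_n'|r^{N-1}dr\to\infty$, while $ap<N$ is needed for the $L^p$ bound.

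Stability is not established either. The candidate $\psi=-x\cdot\nabla u_n+\varepsilon$ satisfies $-\Delta\psi-f_n'(u_n)\psi=-2f_n(u_n)-\varepsilon f_n'(u_n)$. This equals $-4N\kappa<0$ where $u_n=A_n-\kappa r^2$, so $\psi$ is not a supersolution. The weight $-\partial_r u_n$ fails as well: because of the extra term $(N-1)r^{-2}$ it is a subsolution of the linearized equation, not a supersolution.

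The missing idea is that the obstruction you correctly found for one-dimensional profiles comes from monotonicity, not from one-dimensionality. Consider an even spike $U(x)=u(x_1)$ with $u$ decreasing on $(0,1]$. Its $\|\nabla U\|_{L^1(B_{1/2})}$ is about twice its height, which $\|U\|_{L^p}$ does not control for any $p<\infty$. The transversal variables only contribute a fixed factor, so no restriction on $p$ appears. The paper takes $u=|\log x|$ on $[x_0,1]$ (Proposition~\ref{muygeneral}). This gives $\|U\|_{L^p(B_1)}\le C(N,p)$ uniformly in $x_0$, while $\|\nabla U\|_{L^1(B_{1/2})}\ge c\,|\log x_0|$.

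The real work is stability. Since $u'(0)=0$, your Jacobi-field argument is unavailable, and this is where Lemma~\ref{estables1} comes in. Suppose $-u'=Kx$ on $(0,\varepsilon_0]$, so that $f'(u)=0$ there, and $-u'\ge Kx$ on $(\varepsilon_0,1)$. Writing $f'(u)=-u'''/u'$ and integrating by parts gives $\int_0^1(\xi'^2-f'(u)\xi^2)\,dx\ge\int_{\varepsilon_0}^1(\xi'-\frac{u''}{u'}\xi)^2dx-\xi(\varepsilon_0)^2/\varepsilon_0$. The boundary term is then absorbed by applying Cauchy--Schwarz to $\xi(\varepsilon_0)/(K\varepsilon_0)=\int_{\varepsilon_0}^1(\xi/u')'\,dx$, together with $\int_{\varepsilon_0}^1 u'^{-2}dx\le 1/(K^2\varepsilon_0)$.
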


It is easy to see that if a stable function of one variable is regarded as a function of several variables, stability is preserved in higher dimensions. For this reason it suffices to prove our results in dimension $N=1$. The following proposition provides a large class of examples of stable one-dimensional functions.

\begin{proposition}\label{muygeneral}
Let $\phi \in C^\infty((0,1])$ such that $\phi'<0$. Then, for every $x_0\in (0,1)$ there exist $u\in C^\infty([-1,1])$ and $f\in C^\infty (\mathbb{R})$ such that $u$ is an even stable solution of $-u''=f(u)$ in $(-1,1)$ and

\begin{equation}\label{muyfuerte}
\begin{array}{ll}
u(x)<\phi(x) & \text{   if    } \ 0\leq x<x_0,\\
u(x)=\phi(x)  & \text{   if    } \ x_0\leq x\leq 1.      
\end{array}
\end{equation}

\end{proposition}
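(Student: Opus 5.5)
The plan is to build $u$ from its derivative. Set $\psi:=-u'$ on $[0,1]$; then $u$ is even and strictly decreasing on $(0,1]$, $f$ is read off from $u''$, and stability is checked with an explicit positive even solution of the linearized equation. The whole construction is governed by a single small parameter $\beta>0$.

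\textbf{Construction of $u$.} Fix $c\in(0,x_0)$. Choose a cutoff $h\in C^\infty([0,1])$ with $0\le h\le 1$, $h=0$ on $[0,c]$, $h=1$ on $[x_0,1]$, and $0<h<1$ on $(c,x_0)$. Choose $\beta>0$ so small that $\beta x<-\phi'(x)$ for all $x\in[c,1]$; this is possible because $-\phi'$ is continuous and positive on the compact set $[c,1]$. Define
$$\psi(x):=h(x)\bigl(-\phi'(x)\bigr)+\bigl(1-h(x)\bigr)\beta x,\qquad u(x):=\phi(x_0)+\int_x^{x_0}\psi(s)\,ds \quad (0\le x\le 1),$$
and extend $u$ evenly to $[-1,1]$. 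The following properties hold.
\begin{enumerate}
\item On $[x_0,1]$ we have $\psi=-\phi'$, so $u=\phi$ there.
\item On $[0,c]$ we have $u(x)=\mathrm{const}-\beta x^2/2$, so the even extension is $C^\infty$.
\item On $(0,x_0)$ we have $\psi\le-\phi'$, with strict inequality on $(c,x_0)$ and also on $(0,c]$. Hence $\int_x^{x_0}\psi<\int_x^{x_0}(-\phi')$, that is, $u(x)<\phi(x)$ for $0\le x<x_0$.
\item $\psi>0$ on $(0,1]$ and $\psi(x)\ge\beta x$ on $(0,1]$.
\end{enumerate}

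\textbf{Definition of $f$.} Since $u'<0$ on $(0,1]$, $u$ maps $[0,1]$ bijectively onto $[\phi(1),u(0)]$. Set $f(t):=-u''(u^{-1}(t))$ on this interval. The function $u^{-1}$ is smooth on $[\phi(1),u(0))$. Near $u(0)$, $u^{-1}$ behaves like a square root, but there $u''\equiv-\beta$, so $f\equiv\beta$ near $t=u(0)$ and $f$ is smooth up to that endpoint. It is also smooth up to $\phi(1)$, because $u$ is smooth up to $x=1$. Extend $f$ to a function in $C^\infty(\mathbb{R})$ (for example by Seeley/Borel extension). Then $-u''=f(u)$ on $[-1,1]$, with the values on $[-1,0]$ following from evenness.

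\textbf{Stability.} This is the step I expect to be the main obstacle. The function $-u'$ solves the linearized equation $-v''=f'(u)v$, but it is odd and vanishes at $0$, so it cannot serve directly as a positive supersolution.

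Instead, let $w$ be the even solution of $-w''=f'(u)w$ with $w(0)=1$, $w'(0)=0$. It suffices to show $w>0$ on $[-1,1]$: then $w$ is a positive solution of the linearized equation on $\overline{B_1}$, and the standard Picone/ground-state identity gives $Q_u(\xi)=\int w^2|(\xi/w)'|^2\ge0$.

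Both $\psi$ and $w$ solve the linearized equation, and $\psi$ is odd with $\psi'(0)=\beta$. Their Wronskian is therefore constant, $\psi w'-\psi'w\equiv-\beta$, which gives $(w/\psi)'=-\beta/\psi^2$ on $(0,1]$. Hence
$$\frac{w(x)}{\psi(x)}=\frac{w(1)}{\psi(1)}+\beta\int_x^1\frac{ds}{\psi(s)^2}.$$
The right-hand side is positive for every $x\in(0,1]$ as soon as $w(1)\ge0$, so it suffices to prove $w(1)>0$.

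To see this, use $\psi(s)\ge\beta s$ to bound $\beta\int_x^1\psi^{-2}\le\frac{1}{\beta x}-\frac1\beta$. This gives
$$\frac{w(1)}{\psi(1)}\ \ge\ \frac{w(x)}{\psi(x)}-\frac{1}{\beta x}+\frac1\beta \qquad (0<x\le1).$$
Near $0$ we have $\psi=\beta x$ and $w=1+O(x^2)$, so $\frac{w(x)}{\psi(x)}-\frac{1}{\beta x}\to0$ as $x\to0$. Letting $x\to0$ yields $w(1)/\psi(1)\ge1/\beta>0$.

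Therefore $w>0$ on $[0,1]$, and by evenness on $[-1,1]$, so $u$ is stable. The key point is that stability follows from the lower bound $\psi(s)\ge\psi'(0)s$, which the construction guarantees; this is why $\psi$ was taken to be exactly linear near $0$.
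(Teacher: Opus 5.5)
\textbf{Gap in item (3).} Your proof that $u<\phi$ on $[0,x_0)$ does not work. Your choice of $\beta$ only guarantees $\beta x<-\phi'(x)$ on $[c,1]$. On $(0,c)$ nothing compares $\psi=\beta x$ with $-\phi'$, and in general nothing can. The hypothesis $\phi'<0$ allows $-\phi'(x)=o(x)$ as $x\to0$ (e.g.\ $\phi(x)=1-x^3$), and then $\beta x>-\phi'(x)$ near $0$ for every $\beta>0$. So the claimed strict inequality $\psi<-\phi'$ on $(0,c]$ is false. Worse, the conclusion can actually fail for your construction as you parametrized it. With $\phi=1-x^3$ your condition on $\beta$ reads $\beta<3c$; take $\beta=2.9c$. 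Then $\int_0^c(-\phi'-\psi)\,ds=c^3-\beta c^2/2=-0.45\,c^3$. On $(c,x_0)$ the integrand $(1-h)(-\phi'-\beta s)$ lies between $0$ and $3x_0^2(1-h)$. Choosing $h$ to rise to nearly $1$ immediately after $c$ (which your conditions on $h$ allow) makes that contribution smaller than $0.45\,c^3$. This gives $\phi(0)-u(0)=\int_0^{x_0}(-\phi'-\psi)\,ds<0$, so $u>\phi$ near $0$.

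\textbf{Repair.} Near $0$ only an integrated comparison is available, and it needs an extra smallness condition on $\beta$ tied to $h$. For $0<x<c$,
$\phi(x)-u(x)=[\phi(x)-\phi(c)]-\frac{\beta}{2}(c^2-x^2)+\int_c^{x_0}(1-h)(-\phi'-\beta s)\,ds\ge A-\frac{\beta x_0^2}{2}$,
where $A:=\int_c^{x_0}(1-h)(-\phi')\,ds>0$ does not depend on $\beta$. So after fixing $c$ and $h$ you must also require $\beta<2A/x_0^2$. This is exactly what the paper's extra bookkeeping does. It takes $-\phi'\ge 3Kx$ on $[x_0/2,1]$ and normalizes $\Psi(3x_0/4)=1/2$. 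Then the gap accumulated on $[x_0/2,3x_0/4]$ is at least $\int_{x_0/2}^{3x_0/4}Kt\,dt$, which beats the worst possible deficit $\int_0^{x_0/2}Kt\,dt$ on $(0,x_0/2)$.

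\textbf{Stability part.} With that repair the rest of your argument stands, since shrinking $\beta$ preserves $\psi\ge\beta x$. Your Wronskian/ground-state proof of stability is correct and genuinely different from the paper's. The paper integrates by parts against $u'$ on $[\varepsilon_0,1]$ and controls the boundary term $\xi(\varepsilon_0)^2/\varepsilon_0$ by Cauchy--Schwarz. Your route instead produces an explicit positive Jacobi field $w$, with $w(\pm1)>0$. Both rest on the same estimate $\int_x^1\psi^{-2}\le\int_x^1(\beta s)^{-2}\,ds$.
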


The main idea in the proof of Theorem \ref{principal} (the same argument applies to Theorem  \ref{gradiente}) is the following.  If $1\leq p<q\leq \infty$, we choose $\phi$ in Proposition \ref{muygeneral} such that $\phi\in L^p\setminus L^q$. Then, by taking $x_0$ sufficiently small, we obtain that the $L^p$-norm of $u$ remains uniformly bounded on $(0,1)$, while the $L^q$-norm of $u$ on $(x_0,1)$ is arbitrarily large.

\section{Proof of the lack of interior $L^q$ bounds}

\begin{lemma}\label{estables1}

Let $u \in C^\infty([-1,1])$ be an even function such that for some $K>0$ and $\varepsilon_0 \in (0,1)$,
\begin{equation}\label{property}
\begin{array}{ll}
-u'(x)=Kx  & \text{for } 0<x\leq \varepsilon_0,\\
-u'(x)\geq Kx  & \text{for } \varepsilon_0<x<1.
\end{array}
\end{equation}

Then there exists $f \in C^\infty(\mathbb{R})$ such that $u$ is a stable solution of 
$-u''=f(u)$ in $(-1,1)$.

\end{lemma}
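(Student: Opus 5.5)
The plan has three steps: build $f$ from the monotonicity of $u$, obtain a positive solution of the linearized equation from the derivative $u'$, and deduce stability from it. The only delicate point is the behaviour at $x=0$, where $u'$ vanishes.

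\emph{Construction of $f$.} By \eqref{property}, $u'(x)\le -Kx<0$ for $x\in(0,1)$. So $u$ is strictly decreasing on $[0,1]$, and by evenness strictly increasing on $[-1,0]$. Its range on $[-1,1]$ is therefore $[u(1),u(0)]$, and $u|_{[0,1]}$ is a bijection onto this interval. Its inverse $g:[u(1),u(0)]\to[0,1]$ is $C^\infty$ on $[u(1),u(0))$, since $u'\neq0$ on $(0,1]$. On this range we set $f(s)=-u''(g(s))$, which makes $-u''=f(u)$ hold on $[0,1]$, and on $[-1,0]$ by evenness.

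The only issue for smoothness is the endpoint $s=u(0)$. There, \eqref{property} gives $u(x)=u(0)-Kx^2/2$ for $0\le x\le\varepsilon_0$, so $u''\equiv -K$ near $0$. Hence $f\equiv K$ on $(u(\varepsilon_0),u(0)]$, which is smooth. This is precisely why the hypothesis $-u'=Kx$ near $0$ is imposed: a general even function with $u''(0)<0$ would leave the smoothness of $f$ at $u(0)$ unclear. Finally, we extend $f$ to a $C^\infty$ function on all of $\mathbb R$. This is possible because $f$ is smooth up to the endpoint $u(1)$ (as $u\in C^\infty([-1,1])$ with $u'(1)<0$) and constant near $u(0)$. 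The values of $f$ outside the range of $u$ are irrelevant to both the equation and stability.

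\emph{Stability.} Differentiating the equation gives $-(u')''=f'(u)u'$ on $(-1,1)$. Since $u'$ vanishes at $0$, it cannot serve directly as a positive supersolution. Instead we use the decomposition $\xi=\xi\chi_{(0,1)}+\xi\chi_{(-1,0)}$ and treat each half separately. On $(0,1)$, the function $w=-u'>0$ satisfies $-w''=f'(u)w$. For $\xi\in C^1_0$ we write $\xi=w\eta$ on $(0,1)$ and integrate by parts, obtaining
\[
\int_0^1\big(\xi'^2-f'(u)\xi^2\big)\,dx=\int_0^1 w^2\eta'^2\,dx+\Big[w w'\eta^2\Big]_0^1 .
\]
The boundary term at $1$ vanishes because $\xi(1)=0$. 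At $0$ we have $w w'\eta^2=(w'/w)\xi^2$, and $w'/w=1/x$ near $0$, since $w=Kx$ there. This term therefore equals $\xi(x)^2/x$, which blows up unless $\xi(0)=0$, so the two halves cannot simply be glued. This is the main obstacle.

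To resolve it, we use instead the linearized operator near $0$, where $f'(u)=0$ because $f\equiv K$ on $(u(\varepsilon_0),u(0)]$. Near $x=0$ the potential therefore vanishes. On $(0,1)$ we choose the positive function $w_\delta=-u'+\delta$, or better, a hybrid positive function $\psi$ with $-\psi''\ge f'(u)\psi$ that is even and positive on all of $(-1,1)$. Concretely, on $[-\varepsilon_0,\varepsilon_0]$ we take $\psi$ constant (valid since $f'(u)=0$ there), and for $|x|>\varepsilon_0$ we take $\psi=c|u'|$. Matching at $\varepsilon_0$ requires $\psi'=(|u'|)'=K>0$ from the outside against $0$ from the inside. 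This gives a convex kink, whose distributional second derivative is a positive delta. With the sign $-\psi''$, that contributes a negative mass, which is the wrong sign, so this choice fails. We therefore try $\psi(x)=\min(c|u'|,\,\cdot)$-type constructions or, more simply, the test $\psi=\cos(\pi x/(4\varepsilon_0))$-style concave caps. Finally, a positive strict supersolution $\psi$ with only concave kinks gives $Q_u\ge0$ via the same Picone identity $\int(\xi'^2-f'(u)\xi^2)=\int\psi^2((\xi/\psi)')^2+\int(-\psi''-f'(u)\psi)\xi^2/\psi\ge0$.
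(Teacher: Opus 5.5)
Your construction of $f$ is correct and is the same as the paper's. The stability part, however, has a genuine gap. You correctly discard two candidates: the half-line Picone argument with $-u'$, and a constant cap glued to $c|u'|$. After that you never produce a positive supersolution; the ``$\min$-type'' and cosine-cap candidates are only named. They also cannot work as stated. Any even concave cap on $[-\varepsilon_0,\varepsilon_0]$ has $\psi'(\varepsilon_0^-)\le 0$, whereas $c|u'|$ has $\psi'(\varepsilon_0^+)=cK>0$, so the kink always has the wrong sign.

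More fundamentally, your argument only uses $u'<0$ on $(0,1]$ and $f'(u)=0$ on $(0,\varepsilon_0)$. It never uses the quantitative bound $-u'(x)\ge Kx$ on $(\varepsilon_0,1)$, and the lemma is false without it. Indeed, with $w=-u'$, the even solution of $-\psi''=f'(u)\psi$ with $\psi\equiv 1$ on $[-\varepsilon_0,\varepsilon_0]$ is
\[
\psi(x)=\frac{w(x)}{K\varepsilon_0}\Bigl(1-K^2\varepsilon_0\int_{\varepsilon_0}^{x}\frac{dt}{w(t)^2}\Bigr),\qquad \varepsilon_0\le x\le 1 .
\]
Suppose $\int_{\varepsilon_0}^1 w^{-2}\,dt>1/(K^2\varepsilon_0)$, which happens for instance if $w$ is very small on a subinterval of $(\varepsilon_0,1)$. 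Then $\psi$ vanishes at some $x_1<1$, and $\psi\chi_{(-x_1,x_1)}$ gives $Q_u=0$ without being an eigenfunction on $(-1,1)$. So $u$ is unstable.

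The missing step is exactly where this hypothesis enters. The paper drops $\int_0^{\varepsilon_0}\xi'^2\ge 0$. On $(\varepsilon_0,1)$ it integrates by parts using $f'(u)=-u'''/u'$ to get $\int_{\varepsilon_0}^1\bigl(\xi'-\frac{u''}{u'}\xi\bigr)^2dx-\frac{\xi(\varepsilon_0)^2}{\varepsilon_0}$. It then absorbs the boundary term: since $\xi(1)=0$, one has $\frac{\xi(\varepsilon_0)}{u'(\varepsilon_0)}=-\int_{\varepsilon_0}^1\bigl(\frac{\xi}{u'}\bigr)'dx$ with $\bigl(\frac{\xi}{u'}\bigr)'=\frac{1}{u'}\bigl(\xi'-\frac{u''}{u'}\xi\bigr)$. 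Cauchy--Schwarz together with $\int_{\varepsilon_0}^1 u'^{-2}\,dx\le\frac{1}{K^2}\bigl(\frac{1}{\varepsilon_0}-1\bigr)$ finishes the argument, and this bound is where $-u'\ge Kx$ is used.

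In your supersolution language this is equivalent to using the $\psi$ displayed above. It matches the constant cap in a $C^1$ way, since $\psi(\varepsilon_0)=1$ and $\psi'(\varepsilon_0^+)=0$, so there is no kink at all. The same integral bound gives $\psi\ge w/K>0$ on $[\varepsilon_0,1]$. Your Picone identity then yields $Q_u(\xi)=\int_{-1}^1\psi^2\bigl((\xi/\psi)'\bigr)^2dx\ge 0$. The idea you were missing is that the outer piece cannot be a multiple of $u'$ alone; it must include the second solution $w\int w^{-2}$ given by reduction of order.
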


\begin{proof}
We first prove that there exists $f \in C^\infty(\mathbb{R})$ such that 
$u$ solves $-u''=f(u)$ in $[-1,1]$. 
Since $u'<0$ in $(0,1]$, we define
\[
f(s):=-u''(u^{-1}(s)), \qquad s\in [u(1),u(0)),
\]
which yields $f\in C^\infty([u(1),u(0)))$.
Since $f(s)=K$ for $s\in [u(\varepsilon_0),u(0))$, we extend $f$ to $\mathbb{R}$
by setting $f(s)=K$ for $s\ge u(0)$ and extending it in a $C^\infty$ way
to $(-\infty,u(1))$.
Finally, since $u$ is even, it follows that $-u''=f(u)$ in $[-1,1]$.

To prove the stability of $u$, let $\xi\in C^1([-1,1])$ with 
$\xi(-1)=\xi(1)=0$. 
Since $f'(u(x))=0$ for $0<x<\varepsilon_0$ and 
$f'(u(x))=-u'''(x)/u'(x)$ for $\varepsilon_0<x<1$, 
integrating by parts yields

\begin{equation}\label{ss}
\begin{aligned}
\int_0^1 (\xi'^2 - f'(u)\xi^2) dx 
&\geq \int_{\varepsilon_0}^1 \left(\xi'^2 +\frac{u'''}{u'}\xi^2\right) dx \\
&= \int_{\varepsilon_0}^1 \xi'^2 dx
+\left[ u''(x)\frac{\xi (x)^2}{u'(x)}\right]_{\varepsilon_0}^{1}
-\int_{\varepsilon_0}^1 u''\left( \frac{\xi^2}{u'}\right)' dx \\
&= \int_{\varepsilon_0}^1 \xi'^2 dx
- (-K)\frac{\xi (\varepsilon_0)^2}{-K \varepsilon_0}
-\int_{\varepsilon_0}^1 u''\left( \frac{2\xi\xi 'u'-\xi^2 u''}{u'^2}\right) dx \\
&= \int_{\varepsilon_0}^1\left(\xi'-\frac{u''}{u'}\xi\right)^2 dx-\frac{\xi(\varepsilon_0)^2}{\varepsilon_0}.
\end{aligned}
\end{equation}

On the other hand, using \eqref{property} and the Cauchy-Schwarz inequality, we have

\begin{equation*}
\begin{aligned}
\frac{\xi(\varepsilon_0)^2}{K^2 \varepsilon_0^2}
&=\left(\int_{\varepsilon_0}^1\left(\frac{\xi}{u'}\right)' dx\right)^2 =\left(\int_{\varepsilon_0}^1\frac{1}{u'}\left(\xi'-\frac{u''}{u'}\xi\right) dx\right)^2\\
&\leq \int_{\varepsilon_0}^1\frac{1}{u'^2}dx \int_{\varepsilon_0}^1\left(\xi'-\frac{u''}{u'}\xi\right)^2dx\leq \int_{\varepsilon_0}^1\frac{1}{K^2 x^2}dx \int_{\varepsilon_0}^1\left(\xi'-\frac{u''}{u'}\xi\right)^2dx\\
&=\frac{1}{K^2}\left( \frac{1}{\varepsilon_0}-1\right) \int_{\varepsilon_0}^1\left(\xi'-\frac{u''}{u'}\xi\right)^2dx\leq \frac{1}{K^2 \varepsilon_0}\int_{\varepsilon_0}^1\left(\xi'-\frac{u''}{u'}\xi\right)^2dx .
\end{aligned}
\end{equation*}

Therefore
\[
\frac{\xi(\varepsilon_0)^2}{\varepsilon_0}\le
\int_{\varepsilon_0}^1\left(\xi'-\frac{u''}{u'}\xi\right)^2 dx .
\]
Hence, by \eqref{ss}, we deduce that
\[
\int_0^1 (\xi'^2 - f'(u)\xi^2) dx \ge 0 .
\]

By symmetry we also have $\int_{-1}^0 (\xi'^2 - f'(u)\xi^2) dx \ge 0$ and therefore $u$ is stable.

\end{proof}

\

{\em Proof of Proposition \ref{muygeneral}}. Since $\phi'<0$ in $[\frac{x_0}{2},1]$ we can take $K>0$ such that

\begin{equation}\label{cotaK}
-\phi'(x)\geq 3Kx \ \ \text{for } \frac{x_0}{2}\leq x\leq 1.
\end{equation}

Define an even function $u\in C^\infty([-1,1])$ by

\begin{equation}\label{defu}
\begin{aligned}
-u'(x)=&\left( 1-\Psi(x)\right) Kx+\Psi(x)(-\phi'(x))   & \text{              for } 0<x\leq 1,\\ 
 u(1)=&\, \phi (1),&
\end{aligned}
\end{equation}

\noindent where $\Psi\in C^\infty (\mathbb{R})$ is a nondecreasing function satisfying $\Psi(x)=0$ if $x\leq x_0/2$, $\Psi(x)=1$ if $x\geq x_0$, $0<\Psi(x)<1$ if $x_0/2<x<x_0$ and $\Psi(3x_0/4)=1/2$. For instance, take

$$\Psi(x):=\frac{\int_{x_0/2}^{x} e^{-\left( \frac{1}{(t-x_0)^2}+ \frac{1}{(t-x_0/2)^2}\right)}dt}{\int_{x_0/2}^{x_0} e^{-\left( \frac{1}{(t-x_0)^2}+ \frac{1}{(t-x_0/2)^2}\right)}dt}\ \ \ \text{ for } x_0/2<x<x_0,$$

\noindent and extend it to $\mathbb{R}$ by $0$ in $(-\infty,x_0/2]$ and by $1$ in $[x_0,+\infty)$. 

To prove the stability of $u$ we will apply Lemma  \ref{estables1} with $\varepsilon_0=x_0/2$. It is obvious that $-u'(x)=Kx$ for $0<x\leq x_0/2$. On the other hand, if $x\in[x_0,1]$, then $-u'(x)=-\phi'(x)\geq 3Kx> Kx$. Finally, if $x\in (x_0/2,x_0)$, taking into account that $Kx<-\phi'(x)$ and $0<\Psi(x)<1$ we conclude  $Kx< -u'(x)<-\phi'(x)$ and this proves that $u$ is a stable solution of $-u''=f(u)$ in $[-1,1]$ for some $f \in C^\infty(\mathbb{R})$.

We now prove \eqref{muyfuerte}. Clearly $u(x)=\phi(x)$ for $x_0\leq x\leq 1$. In addition, as mentioned before,  $-u'(x)<-\phi'(x)$ for $x_0/2<x<x_0$ and it follows easily that  $u(x)<\phi(x)$ for $x_0/2< x<x_0$.

It remains to show that $u(x)<\phi(x)$ for $0\leq x\leq x_0/2$. To this end, we first observe that $0\leq \Psi(x)\leq 1/2$ and $Kx<-\phi'(x)$ for $x\in [x_0/2,3x_0/4]$. Therefore

$$-u'(x)\leq \frac{Kx-\phi'(x)}{2} \ \ \ \ \ \ \ \ \mbox{    for   } \  x\in \left[\frac{x_0}{2},\frac {3x_0}{4}\right].$$

Applying this and \eqref{cotaK} yields

$$u'(x)-\phi'(x)\geq \frac{-Kx-\phi'(x)}{2}\geq Kx \ \ \ \ \ \ \ \ \mbox{    for   } \  x\in \left[\frac{x_0}{2},\frac {3x_0}{4}\right].$$

From this, taking into account that $u(x_0)=\phi(x_0)$ and $-u'(x)<-\phi'(x)$ for $x\in [3x_0/4,x_0]$; $-\phi'(x)>0$ and $-u'(x)=Kx$ for  $x\in (0,x_0/2]$, we conclude that

\[
\phi(x)-u(x)=\int_x^{x_0}(u'(t)-\phi'(t))\,dt
> \int_x^{3x_0/4}(u'(t)-\phi'(t))\,dt
\]
\[
> \int_0^{x_0/2}u'(t)\,dt+\int_{x_0/2}^{3x_0/4}(u'(t)-\phi'(t))\,dt
\]
\[
\ge \int_0^{x_0/2}(-Kt)\,dt+\int_{x_0/2}^{3x_0/4}Kt\,dt
=\frac{Kx_0^2}{32}>0
\qquad \text{for } 0\le x\le x_0/2 ,
\]

\noindent and the proof is complete. 

\qed

\

As mentioned in the introduction the idea of the proof of the main results is to consider functions that depend on a single variable. Thus, if the one dimensional solution is stable, then the corresponding function in several variables is also stable. More precisely, if $u \in C^\infty([-1,1])$ is a stable solution of $-u'' = f(u)$ in $(-1,1)$ for some $f \in C^\infty(\mathbb{R})$, then
\[
U(x_1,\ldots,x_N):=u(x_1)
\]
is a stable solution of $-\Delta U = f(U)$ in $(-1,1)^N$ for the same function $f$.

Indeed, let $\xi \in C^1([-1,1]^N)$ be such that $\xi=0$ on $\partial(-1,1)^N$. Then, for any $(x_2,\ldots,x_N)\in(-1,1)^{N-1}$, the one-dimensional function $\xi(\cdot,x_2,\ldots,x_N)$ belongs to $C^1([-1,1])$ and vanishes at $-1$ and $1$. Hence, by the stability of $u$ we conclude that 

\[
\begin{aligned}
\int_{(-1,1)^N} |\nabla \xi|^2\,dx
&\ge \int_{(-1,1)^N} \left(\frac{\partial \xi}{\partial x_1}\right)^2 dx \\
&= \int_{(-1,1)^{N-1}}
\left(
\int_{-1}^{1}
\left(\frac{\partial \xi}{\partial x_1}\right)^2 dx_1
\right)
dx_2 \cdots dx_N \\
&\ge \int_{(-1,1)^{N-1}}
\left(
\int_{-1}^{1} f'(u(x_1))\,\xi^2 \, dx_1
\right)
dx_2 \cdots dx_N \\
&= \int_{(-1,1)^N} f'(U)\,\xi^2\,dx ,
\end{aligned}
\]

\noindent  and the stability of $U$ is proved in $(-1,1)^N$, and thus also in $B_1 \subset (-1,1)^N$.

\

{\em Proof of Theorem \ref{principal}}. Suppose by contradiction that there exists a constant  $C=C(N,p,q)$ depending on $N$, $p$ and $q$ such that 

\begin{equation}\label{contradiction}
\Vert U\Vert_{L^q (B_{1/2})}\leq C \Vert U\Vert_{L^p (B_1)},
\end{equation}

\noindent for every stable solution $U\in C^\infty(\overline{B_1})$ to a problem of the type \eqref{mainequation}.

We first consider the case $1\leq p<q=\infty$. To do this, consider an arbitrary $x_0\in (0,1/2)$ and take $\phi(x)=\vert \log x\vert$ in Proposition \ref{muygeneral}. Define $U(x_1,x_2,...,x_N):=u(x_1)$ in $B_1$, where $u$ is the function obtained in this proposition. Hence

\begin{equation}\label{yaesta}
\Vert U\Vert_{L^p(B_1)}^p\leq \Vert U\Vert_{L^p((-1,1)^N)}^p=2^N\int_0^1 \vert u(x_1)\vert^p dx_1\leq 2^N\int_0^1\vert\log x_1\vert^p dx_1,
\end{equation}

\noindent where the last expression is a constant depending on $N$ and $p$, but not on $x_0\in (0,1/2)$.

On the other hand

$$\Vert U\Vert_{L^\infty(B_{1/2})}\geq \vert U(x_0,0,...,0)\vert=\vert \log x_0\vert \longrightarrow +\infty\ \ \ \ \ \text{as}\ \ x_0\rightarrow 0,$$

\noindent which contradicts \eqref{contradiction}.

Consider now the case  $1\leq p<q<\infty$. Choose an arbitrary $x_0\in (0,1/(2\sqrt{N}))$ and take $\phi(x)=x^{-1/q}-1$ in Proposition \ref{muygeneral}. Similar to the previous case define $U(x_1,x_2,...,x_N):=u(x_1)$ in $B_1$, where $u$ is the function obtained in this proposition. Therefore

$$\Vert U\Vert_{L^p(B_1)}^p\leq \Vert U\Vert_{L^p((-1,1)^N)}^p=2^N\int_0^1 \vert u(x_1)\vert^p dx_1\leq 2^N\int_0^1x_1^{-p/q} dx_1=\frac{2^N q}{q-p.}$$

Again this expression only depends on $N$, $p$ and $q$, but not on $x_0\in (0,1/(2\sqrt{N}))$.

Observe that $B_{1/2}\supset (x_0,1/(2\sqrt{N}))\times (0,1/(2\sqrt{N}))^{N-1}$. It follows that

\begin{align*}
\Vert U\Vert_{L^q(B_{1/2})}
&\ge \Vert U\Vert_{L^q\left((x_0,\tfrac{1}{2\sqrt N})\times (0,\tfrac{1}{2\sqrt N})^{N-1}\right)} \\
&= \frac{1}{(2\sqrt N)^{\frac{N-1}{q}}}
\Vert x_1^{-1/q}-1\Vert_{L^q(x_0,1/(2\sqrt N))} \\
&\ge \frac{1}{(2\sqrt N)^{\frac{N-1}{q}}}
\left(
\Vert x_1^{-1/q}\Vert_{L^q(x_0,1/(2\sqrt N))}
-
\Vert 1\Vert_{L^q(x_0,1/(2\sqrt N))}
\right) \\
&= \frac{1}{(2\sqrt N)^{\frac{N-1}{q}}}
\left(
\left(\log\frac{1}{2\sqrt N}-\log x_0\right)^{1/q}
-
\left(\frac{1}{2\sqrt N}-x_0\right)^{1/q}
\right)
\to +\infty
\end{align*}

\noindent as \(x_0\to0\), obtaining again a contradiction with  \eqref{contradiction}.

\qed

\

{\em Proof of Theorem \ref{gradiente}}. The proof is similar in spirit to the previous one. Arguing by contradiction, suppose that there exists a constant  $C=C(N,p,q)$ depending on $N$, $p$ and $q$ such that 

\begin{equation}\label{contradictiongradiente}
\Vert\nabla U\Vert_{L^q (B_{1/2})}\leq C \Vert U\Vert_{L^p (B_1)},
\end{equation}

\noindent for every stable solution $U\in C^\infty(\overline{B_1})$ to a problem of the type \eqref{mainequation}.

We first prove the theorem for the case $q=1$ and $1\leq p<\infty$. As the first case of the proof of Theorem \ref{principal}, consider an arbitrary $x_0\in (0,1/(2\sqrt{N}))$ and take $\phi(x)=\vert \log x\vert$ in Proposition \ref{muygeneral}. Take now $U(x_1,x_2,...,x_N):=u(x_1)$ in $B_1$, where $u$ is the function obtained in this proposition. Hence we have exactly \eqref{yaesta}, obtaining that $ \Vert U\Vert_{L^p (B_1)}$ is bounded by a constant depending on $N$ and $p$, but not on $x_0\in (0,1/(2\sqrt{N}))$.

On the other hand, using again that  $B_{1/2}\supset (x_0,1/(2\sqrt{N}))\times (0,1/(2\sqrt{N}))^{N-1}$, we conclude that

\begin{align*}
\Vert \nabla U\Vert_{L^1(B_{1/2})}
&\ge \Vert \nabla U\Vert_{L^1\left((x_0,\tfrac{1}{2\sqrt N})\times (0,\tfrac{1}{2\sqrt N})^{N-1}\right)} \\
&= \frac{1}{(2\sqrt N)^{N-1}}
\Vert x_1^{-1}\Vert_{L^1(x_0,1/(2\sqrt N))} \\
&= \frac{1}{(2\sqrt N)^{N-1}}
\left(\log\frac{1}{2\sqrt N}-\log x_0\right)
\to +\infty\ \ \ \ \ \text{as}\ \ x_0\rightarrow 0,
\end{align*}

\noindent contradicting \eqref{contradictiongradiente}.

In the case $1<q\leq \infty$, since $L^\infty (B_1)\subset L^p(B_1)$ for $1\leq p\leq \infty$, there is no loss of generality in assuming $p=\infty$. In this case take $x_0\in (0,1/(2\sqrt{N}))$ and consider $\phi(x)=1-x^{1-1/q}$ in Proposition \ref{muygeneral}. As before, define $U(x_1,x_2,...,x_N):=u(x_1)$ in $B_1$, where $u$ is the function obtained in this proposition. Therefore

$$\Vert U\Vert_{L^\infty(B_1)}\leq \Vert 1-\vert x_1\vert^{1-1/q}\Vert_{L^\infty(B_1)}=1.$$

We also obtain

\begin{align*}
\Vert \nabla U\Vert_{L^q(B_{1/2})}
&\geq \Vert \nabla U\Vert_{L^q\left((x_0,\tfrac{1}{2\sqrt N})\times (0,\tfrac{1}{2\sqrt N})^{N-1}\right)} \\
&= \frac{1}{(2\sqrt{N})^\frac{N-1}{q}}
\Vert -(1-1/q) x_1^{-1/q}\Vert_{L^q(x_0,1/(2\sqrt N))} \\
&= \frac{1}{(2\sqrt N)^\frac{N-1}{q}}(1-1/q)
\left(\log\frac{1}{2\sqrt N}-\log x_0\right)^{1/q}
\to +\infty
\end{align*}

\noindent as $x_0\to0$, which again contradicts \eqref{contradictiongradiente}.

\qed


\begin{thebibliography}{99}

\bibitem{cabre3} X. Cabré,  {\em A new proof of the boundedness results for stable solutions to semilinear elliptic equations}, Discrete Contin. Dyn. Syst. 39 (2019), no. 12, 7249-7264.

\bibitem{cabre2} X. Cabré,  {\em Regularity of minimizers of semilinear elliptic problems up to dimension 4}, Comm. Pure Appl. Math. 63 (2010), no. 10, 1362-1380

\bibitem{cabre1} X. Cabré,  {\em Hölder regularity of stable solutions to elliptic equations up to  $\mathbb{R}^9$: full quantitative proofs}, Bull. Amer. Math. Soc. (N.S.) 63 (2026), no. 1, 1–57.

\bibitem{cc} X. Cabr\'e and  A. Capella, {\em Regularity of radial minimizers and extremal solutions of semilinear elliptic equations}, J. Funct. Anal. 238 (2006), no.2, 709-733.

\bibitem{cfrs} X. Cabré, A. Figalli, X. Ros-Oton and J. Serra, {\em Stable solutions to semilinear elliptic equations are smooth up to dimension 9}, Acta Math. 224 (2020), 187–252.

\bibitem{cr} M. G. Crandall and P.H. Rabinowitz, {\em Some continuation and variational methods for positive solutions of nonlinear elliptic eigenvalue problems}, Arch. Rational Mech. Anal. 58 (1975), 207-218.

\bibitem{dupaigne} L. Dupaigne, {\em Stable Solutions of Elliptic Partial Differential Equations}, Monographs and surveys in Pure and Applied Math (2011).

\bibitem{g} I. M. Gelfand, {\em Some problems in the theory of quasilinear equations}, Amer. Math. Soc. Transl. (2) 29 (1963), 295-381.

\bibitem{fp} F. Peng, {\em The boundedness of stable solutions to semilinear elliptic equations with linear lower bound on nonlinearities}, Int. Math. Res. Not. IMRN (2024), no. 8, 6350–6373.  

\bibitem{pzz} F. Peng, Y. R.-Y. Zhang, and Y. Zhou, {\em Interior Hölder regularity for stable solutions to semilinear elliptic equations up to dimension 5}, Preprint, arXiv:2204.06345 [math.AP], (2022).

\end{thebibliography}
\end{document}